\newtheorem{theorem}{Theorem}[section]
\newtheorem{lemma}[theorem]{Lemma}
\newtheorem{proposition}[theorem]{Proposition}
\numberwithin{equation}{section}
\def\R{{\mathbb R}}
\def\E{{{\mathbb E}\,}}
\def\N{{\mathbb N}}
\def\Var{{\mathop {{\rm Var\, }}}}
\def\square{{\vcenter{\vbox{\hrule height.3pt
        \hbox{\vrule width.3pt height5pt \kern5pt
           \vrule width.3pt}
        \hrule height.3pt}}}}
\def\tlint{{- \kern-0.85em \int \kern-0.2em}}
\def\dlint{{- \kern-1.05em \int \kern-0.4em}}
\def\cF{{\cal F}}
\def\Om{{\Omega}}
\def\cF{{\cal F}}
\def\de{{\delta}}
\def\Om{{\Omega}}
\def\de{{\delta}}
\def \eref#1{\hbox{(\ref{#1})}}
\def \eref#1{\hbox{(\ref{#1})}}
\newenvironment{proof}[1][Proof]{\noindent\textit{#1.} }{\hfill \rule{0.5em}{0.5em}}
\begin{document}

\title{Central limit theorem for an additive functional of the fractional Brownian motion II}
\date{\empty }
\author{  David Nualart\thanks{ D. Nualart is supported by the NSF grant
DMS-1208625.} \ \
and \ Fangjun Xu\thanks{F. Xu is supported in part by the Robert Adams Fund.}\\
Department of Mathematics \\
University of Kansas \\
Lawrence, Kansas, 66045 USA}

\maketitle

\begin{abstract}
\noindent We prove a central limit theorem for an additive
functional of the $d$-dimensional fractional Brownian motion
with Hurst index $H\in(\frac{1}{2+d},\frac{1}{d})$, using the method of moments,
extending the result by Papanicolaou, Stroock and Varadhan in the
case of the standard Brownian motion.

\vskip.2cm \noindent {\it Keywords:} fractional Brownian motion,
central limit theorem, local time, method of moments.

\vskip.2cm \noindent {\it Subject Classification: Primary 60F05;
Secondary 60G22.}
\end{abstract}

\section{Introduction}
Let  $\left\{ B(t)=(B^1(t),\dots,
B^d(t)), t\geq 0\right\}$  be a $d$-dimensional fractional Brownian
motion (fBm) with Hurst index $H\in (0,1)$. The local time of $B$, defined as  $L_t(x)=\int_0^t \de(B(s)-x) ds$, for $t\ge 0$ and $x\in \R^d$, where $\de$ is the Dirac delta function,  exists and   is jointly continuous in $t$ and $x$ if $Hd<1$  (see \cite{GH}).  For any
integrable function $f:\R^d \rightarrow \R$, using the scaling property of the fBm and the continuity of the local time,  one can easily show  the
following convergence in law in the space $C([0,\infty))$,
as $n$ tends to infinity
\begin{equation}\label{e.1.1}
\Big(  n^{Hd-1} \int_0^{nt} f(B(s))\, ds\,, t\ge 0\Big) \
\overset{\mathcal{L}}{\rightarrow } \ \Big( L_t(0) \int_{\R^d}
f(x)\, dx\,, t\ge 0\Big)\,.
\end{equation}
 
If we assume that $\int_{\R^d}f(x)\, dx=0$, a central limit theorem holds with a random variance. In order to formulate this theorem, we need to introduce some notation.   Fix a number $\beta>0$ and denote
\[
H_{0}^{\beta }=\Big\{f \in L^{1}(\R^d):
\int_{\R^d}|f(x)| |x|^{\beta } dx<\infty \quad {\rm and}\quad
\int_{\R^d }f(x)\, dx=0\Big\}\,. \]
For any  $f\in H_{0}^{\beta }$, and assuming $\beta \in (0,2)$,    the quantity  (see Lemma 4.1 in \cite{HNX})
\begin{equation}  \label{beta}
          \Vert f\Vert _{\beta }^{2}:=-\int_{\R^{2d}}f(x)f(y)|x-y|^{\beta }\, dx\, dy = c_{\beta,d} \int_{\R^{d}} |\widehat{f}(x)|^2 |x|^{-\beta-d} dx
\end{equation}
is finite and nonnegative, where $\widehat{f}$ denotes the Fourier transform of $f$.

 Then, the following central limit theorem holds.
\begin{theorem}
\label{th1} Suppose $\frac{1}{d+2}<H<\frac{1}{d}$  and  $f\in H^{\frac{1}{H}-d}_0$.
Then
\[
  \Big(  n^{\frac{Hd-1}{2}}\int_{0}^{nt} f(B(s))\, ds\,,
  \ t\ge 0\Big) \ \ \overset{\mathcal{L}}{\longrightarrow } \ \
  \Big(  \sqrt{C_{H,d}} \, \| f\|_{\frac{1}{H}-d}\, W (L_{t}(0))\,, t\ge
  0\Big)
\]
in the space $C([0,\infty ))$, as $n$ tends to infinity,   where $W $ is a real-valued standard Brownian
motion independent of $B$ and
\[
    C_{H,d} = \frac 2 {(2\pi)^\frac{d}{2}}   \int_0^\infty w^{-Hd} \big(1-\exp(-\frac{1}{2w^{2H}})\big)\,
    dw= \frac{2^{1-\frac{1}{2H}}}{(1-Hd)\pi^{\frac{d}{2}}}\Gamma\Big(\frac{Hd+2H-1}{2H}\Big).
\]
\end{theorem}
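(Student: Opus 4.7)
The plan is to use the method of moments combined with a tightness argument, following the Papanicolaou--Stroock--Varadhan strategy but adapted to the non-Markovian setting via Fourier analysis and local non-determinism of fBm. The self-similarity of $B$ gives
$$
F_n(t) := n^{(Hd-1)/2}\int_0^{nt} f(B(s))\, ds \stackrel{d}{=} n^{(Hd+1)/2}\int_0^t f(n^H B(s))\, ds,
$$
which will be the more convenient representation for moment computations.

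The heart of the proof is to establish stable convergence of finite-dimensional distributions by showing that for every $k\ge 1$, smooth bounded $g_1,\dots,g_p$, and times $u_1,\dots,u_p$,
$$
\E\Big[F_n(t)^{2k}\prod_{i=1}^p g_i(B(u_i))\Big] \ \longrightarrow\ (2k-1)!!\,(C_{H,d}\|f\|^2_{1/H-d})^k\, \E\Big[L_t(0)^k\prod_{i=1}^p g_i(B(u_i))\Big],
$$
with odd moments vanishing. This identifies the joint limit as $\sqrt{C_{H,d}}\,\|f\|_{1/H-d}\,W(L_t(0))$ with $W$ independent of $B$. Using $\widehat{f}$, I would expand $F_n(t)^{2k}$ as
$$
(2\pi)^{-2kd}n^{k(Hd+1)}\!\!\int_{[0,t]^{2k}}\!\!ds\!\int_{\R^{2kd}}\prod_{j=1}^{2k}\widehat{f}(\xi_j)\,\exp\Big(-\tfrac{n^{2H}}{2}\,\mathrm{Var}\Big(\sum_j\xi_j\!\cdot\!B(s_j)\Big)\Big)d\xi.
$$
The dominant contribution comes from configurations in which the $2k$ time variables cluster into $k$ pairs at scale $O(n^{-1/H})$. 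On each pair $(s_{2j-1},s_{2j})$, I would perform the change of variables $s_{2j}=s_{2j-1}+v_j n^{-1/H}$; the local non-determinism of the fBm then decouples the Gaussian exponential into near-independent blocks. Within each block, integrating $\xi_{2j-1},\xi_{2j}$ against the Gaussian density and invoking identity \eref{beta} produces one factor of $\|f\|^2_{1/H-d}$, the $v_j$ integration yields $C_{H,d}$, and the combinatorial count of pairings gives $(2k-1)!!$. The remaining integration over the $k$ "outer" times $s_{2j-1}$ produces $L_t(0)^k/k!$ via the local time occupation formula; coupling this with the test functions $g_i(B(u_i))$ through the joint Gaussian structure delivers the mixed moment on the right.

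Tightness in $C([0,\infty))$ would follow from Kolmogorov's criterion once one shows $\E|F_n(t)-F_n(s)|^{2k}\le C|t-s|^k$ uniformly in $n$, via the same Fourier estimates used for the moment convergence. The main obstacle is the analysis of the non-diagonal terms in the moment expansion: because fBm is not Markovian, the covariance matrix $\bigl(R(s_j,s_l)\bigr)_{j,l}$ does not factorize over time intervals, so controlling configurations where times are not well-paired requires sharp lower bounds on the conditional variances of $B(s_j)$ given the remaining $B(s_l)$'s. The threshold $H>\frac{1}{d+2}$ enters precisely here: it is equivalent to $\frac{1}{H}-d<2$, which ensures both that $\|f\|_{1/H-d}^2$ is finite via \eref{beta} and that the Fourier integrals arising in the pair contributions are convergent; this is what makes the method of moments feasible in the first place.
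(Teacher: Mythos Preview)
Your high-level strategy---method of moments, Fourier representation of $\E[F_n(t)^{2k}]$, local non-determinism, and a pairing heuristic for the time variables---is the right framework, and indeed is essentially what was carried out in \cite{HNX} for the restricted range $H\in(\tfrac{1}{d+1},\tfrac{1}{d})$. The genuine gap is in the step you yourself flag as the ``main obstacle'': controlling the contribution of configurations where the $2k$ times are \emph{not} grouped into $k$ tight pairs. You invoke sharp conditional-variance lower bounds (i.e.\ local non-determinism), but LND alone is exactly what \cite{HNX} used, and it is \emph{not} sufficient to push the argument down to $H>\tfrac{1}{d+2}$; that is why the present paper exists. What is needed, and what your proposal lacks, is a concrete mechanism to exploit the cancellation coming from $\widehat f(0)=0$ across the whole chain of variables, not just within isolated pairs. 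The paper accomplishes this with an iterative Fourier substitution (Proposition~\ref{prop1}): after the change of variables $x_i=\sum_{j\ge i}y_j$, one replaces the factors $\widehat f(x_{k}-x_{k+1})$ one at a time by $\widehat f(-x_{k+1})$ or $\widehat f(x_k)$, bounding each discrepancy by $c|x_k|^{\alpha}$ with carefully tuned exponents $\alpha_i$, and shows the accumulated error is $O(n^{-\gamma})$ for $\gamma$ as in~\eqref{gamma}. This reduces the moment to the ``diagonal'' form $\prod_j|\widehat f(x_{2j})|^2$ \emph{before} any pairing of times is attempted, which is what makes both tightness and the truncation step in Proposition~\ref{even} go through on the full range.

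Two smaller points. First, the paper does not prove stable convergence via mixed moments with test functionals $g_i(B(u_i))$; it computes directly the joint moments of increments $F_n(b_i)-F_n(a_i)$ over disjoint intervals and matches them to Lemma~\ref{lema2}. Your route is in principle legitimate but would require handling the additional coupling with the $B(u_i)$, which you do not address. Second, your claimed tightness exponent $|t-s|^k$ is stronger than what the paper establishes (namely $|t-s|^{k(1-Hd)-\gamma}$ in Proposition~\ref{tight}); the weaker exponent already suffices for Kolmogorov once $k$ is large, and obtaining it already requires the iterative estimate of Proposition~\ref{prop1}.
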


This theorem has been proved by Hu, Nualart and Xu in the reference \cite{HNX}, in the case where the Hurst parameter $H$ satisfies  $\frac{1}{d+1}<H<\frac{1}{d}$, and it has been conjectured in that paper that the result can be extended to the case
$\frac{1}{d+2}<H \le \frac{1}{d+1}$. The purpose of the present paper is to prove this conjecture. With this aim we will develop a new approach to prove Theorem \ref{th1} based on Fourier analysis. 

Note that the lower bound $\frac 1{d+2}$ is optimal because for $H\le \frac 1{d+2}$ the constant $C_{H,d}$ is infinite.
When  $d=1$ and $H=\frac 12$,  the above theorem was
obtained by Papanicolaou, Stroock and Varadhan in
\cite{PSV} with $C_{\frac 12, 1}=2$.

As in the reference \cite{HNX}, the  proof of Theorem \ref{th1} is based on  the method of moments. In order to handle    the integrals on $[0,t]^{2m}$, with respect to the measure $ds_1  \cdots ds_{2m}$, we   make  the change of variables 
$u_{2k-1}=n(s_{2k}-s_{2k-1})$ and  $u_{2k}=s_{2k}$, $1\le k\le m$. Then, the increments of $B$ in small intervals will be responsible for the independent noise appearing in the limit.  The main novelty of our approach, in comparison with \cite{HNX},  is a new methodology based on Fourier  analysis and an iterative procedure in order to get the right estimates to derive  the tightness of the laws and to show the convergence to zero in the truncation argument.

After some preliminaries in Section 2,  in  Section 3  we prove some technical estimates based on Fourier analysis which play a fundamental role in our approach. Finally, Section 4 is devoted to the 
proof of Theorem  \ref{th1}. Throughout this paper, if not mentioned otherwise, the letter $c$, 
with or without a subscript, denotes a generic positive finite
constant whose exact value is independent of $n$ and may change from
line to line.

\section{Preliminaries}
Let $\left\{ B(t)= (B^1(t),\dots,
B^d(t)), t\geq 0\right\} $ be a $d$-dimensional fractional Brownian
motion with Hurst index $H\in (0,1)$, defined on some probability space $(\Om, \cF, P)$. That is, the components
of $B$ are independent centered Gaussian processes with covariance 
\[
\E\big(B^i(t)B^i(s)\big)=\frac{1}{2}\big(t^{2H}+s^{2H}-|t-s|^{2H}\big).
\]  

The next lemma (see Lemma 2.1 in \cite{HNX}) gives a formula for the moments of the  increments of the process  
 $\left\{W (L_{t}(0)), t\ge 0\right\}$ on disjoint intervals, where $W$ is a real-valued standard Brownian motion
independent of $B$.
\begin{lemma} \label{lema2} Fix a finite number of disjoint intervals 
$(a_i, b_i]$ in $[0,\infty)$, where $i = 1, \dots, N$  and $b_i \leq a_{i+1}$. 
Consider a multi-index $\mathbf{m} = (m_1, \dots, m_N)$, where $m_i \geq 1$ and 
$1 \leq  i \leq N$. Then
\begin{eqnarray}
&&\E \Big(\prod_{i=1}^N \big[W (L_{b_i} (0) ) - W (L_{a_i} (0))\big]^{m_i} \Big) \label{2.1} \\
&=&\begin{cases} 
\Big(\prod\limits_{i=1}^N \frac{m_i!}{ 2^{\frac{m_i}{2}}   (2\pi )^{\frac{m_id}{4} }   (m_i/2)!} \Big)
\displaystyle \int_{\prod\limits_{i=1}^N [a_i ,b_i]^{\frac{m_i}{ 2}}} \det(A(w))^{-\frac{1}{2}}\, dw
&\hbox{if all $m_i$ are even}\\
0 & \hbox{otherwise}, \\
\end{cases}   \notag
\end{eqnarray}
where $A(w)$ is the covariance matrix of the Gaussian random vector 
\[
\Big(B(w^i_k): 1\leq i\leq N\; \text{and}\; 1\leq k\leq \frac{m_i}{2}\Big).
\]
\end{lemma}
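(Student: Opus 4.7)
The plan is to exploit the independence of $W$ and $B$ by conditioning on $B$, then reduce the resulting local-time moment to an expectation of a product of Dirac deltas of Gaussian vectors, which is just a Gaussian density at the origin.

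First, since $W$ is independent of $B$ and $L_t(0)$ is $\sigma(B)$-measurable, conditional on $B$ the process $t \mapsto W(L_t(0))$ is a Brownian motion time-changed by the continuous nondecreasing function $t \mapsto L_t(0)$. Because the intervals $(a_i,b_i]$ are disjoint with $b_i \le a_{i+1}$, the increments $W(L_{b_i}(0))-W(L_{a_i}(0))$ are, conditionally on $B$, independent centered Gaussians with variances $L_{b_i}(0)-L_{a_i}(0)$. Applying the standard formula $\E(X^m)=\tfrac{m!}{2^{m/2}(m/2)!}\sigma^m$ for $X\sim N(0,\sigma^2)$ with $m$ even (and $0$ if $m$ is odd) to each factor gives
\[
\E\Big(\prod_{i=1}^N [W(L_{b_i}(0))-W(L_{a_i}(0))]^{m_i}\,\Big|\,B\Big)=\prod_{i=1}^N\frac{m_i!}{2^{m_i/2}(m_i/2)!}\,(L_{b_i}(0)-L_{a_i}(0))^{m_i/2}
\]
when all $m_i$ are even, and vanishes otherwise. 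This already yields the dichotomy and the combinatorial prefactors appearing in \eqref{2.1}.

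Next I would take the (unconditional) expectation. Writing each increment of the local time as $L_{b_i}(0)-L_{a_i}(0)=\int_{a_i}^{b_i}\de(B(s))\,ds$ and expanding the product, Fubini (after regularizing $\de$ by a heat kernel $p_\vare$ and letting $\vare\downarrow 0$, using the joint continuity of the local time and $L^p$-boundedness of the approximations, all of which hold under $Hd<1$) produces
\[
\E\Big(\prod_{i=1}^N (L_{b_i}(0)-L_{a_i}(0))^{m_i/2}\Big)=\int_{\prod_{i}[a_i,b_i]^{m_i/2}}\E\Big(\prod_{i,k}\de(B(w_k^i))\Big)\,dw.
\]
For a centered nondegenerate Gaussian vector on $\R^{Md}$ (here $M=\sum_i m_i/2$), the expectation $\E\prod_{i,k}\de(B(w_k^i))$ equals the value of its density at the origin, which is $(2\pi)^{-Md/2}\det(\text{Cov})^{-1/2}$. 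Using that the $d$ components of $B$ are independent and identically distributed, the covariance matrix is block-diagonal of the form $A(w)\otimes I_d$ with $A(w)$ as in the statement, and the factor $(2\pi)^{-Md/2}=\prod_i(2\pi)^{-m_id/4}$ matches the displayed product.

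Assembling the three pieces—the Gaussian moment prefactor, the Fubini identity, and the Gaussian density at the origin—yields exactly the right-hand side of \eqref{2.1}. The only nonroutine point is the regularization argument justifying Fubini and the passage $\de_\vare\to\de$ uniformly in the integration variables $w$; this is handled by the standard local-time estimates available for fBm in the regime $Hd<1$, and is the step I would be most careful to write out rigorously.
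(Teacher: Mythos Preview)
Your argument is correct and is the natural way to prove this lemma. Note, however, that the paper does not actually give a proof here: it simply quotes the result as Lemma~2.1 of \cite{HNX}, so there is no ``paper's proof'' to compare against beyond that citation. Your three-step outline --- condition on $B$ to reduce to Gaussian moments, rewrite the local-time moment as an integral of $\E\big(\prod_{i,k}\de(B(w^i_k))\big)$, and evaluate the latter as the Gaussian density at the origin --- is exactly the standard derivation and is almost certainly what is done in \cite{HNX} as well.

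One small notational wrinkle: in the statement, $A(w)$ is already the covariance matrix of the full $Md$-dimensional vector $\big(B(w^i_k)\big)$, so it does not make sense to write that this covariance ``is of the form $A(w)\otimes I_d$.'' What you mean is that the $Md\times Md$ covariance factors as $\widetilde A(w)\otimes I_d$ (or, depending on ordering, is block-diagonal with $d$ copies of the $M\times M$ one-dimensional covariance $\widetilde A(w)$), whence $\det A(w)=\det\widetilde A(w)^{\,d}$. Either way the density at the origin is $(2\pi)^{-Md/2}\det(A(w))^{-1/2}$, matching \eqref{2.1}, so the slip is purely cosmetic.
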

As a consequence, the law of the random vector  $\big(W (L_{b_i} (0) ) - W (L_{a_i} (0)): 1\le i\le N\big)$ is determined by
the moments computed in the above lemma.

We shall use the following local nondeterminism property of the fractional
Brownian motion (see \cite{Be}):  For any $n\ge 2$ there exists a positive constant $k_H$ depending on $n$, such that for any $0=s_0<s_1\leq \dots \leq
s_n<\infty$ and $ u_1,\dots, u_n\in\R^d$,  
\begin{equation}
\Var\Big(\sum^n_{i=1} u_i\cdot \big(B(s_i)-B(s_{i-1})\big)\Big)\geq
k_H\sum^n_{i=1}|u_i|^2(s_i-s_{i-1})^{2H}\,.\label{lndp}
\end{equation}

\section{Technical estimates}

We are interested in the sequence of stochastic processes defined by
\[
F_n(t)= n^{\frac{Hd-1}{2}}\int_{0}^{nt} f(B(s))\, ds.
\]
For $0\leq a<b<\infty$ and $m\in\N$, let  $I^n_m=\E\left[ (F_n(b) -F_n(a) ^m\right]$. It is easy to see that
\begin{align*}
I_m^n& = m!\, n^{m\frac{  Hd-1 }{2}} \int_{D_m} \E \Big ( \prod_{i=1}^m f(B(s_i)) \Big)\, ds \\
&= c_{m,d}\,  n^{m\frac{ Hd-1 }{2}}  \int_{ \R ^{md}}  \int_{D_m}  \Big(\prod_{i=1}^m\widehat{f}(y_i) \Big)\,  
 \exp\Big(-\frac{1}{2}\Var\big(\sum\limits^m_{i=1} y_i\cdot B(s_i)\big)\Big)\, ds\, dy,
 \end{align*}
where $c_{m,d}=\frac{m!}{(2\pi)^{md}}$ and $D_m=\big\{(s_1, \dots, s_m): na<s_1<\cdots<s_m<nb\big\}$. Making the change of variables
$x_i =\sum\limits_{j=i}^m y_j$ (with the convention that $x_{m+1} =0$) we can write
\begin{align*}
I^n_m
&=c_{m,d}\, n^{m\frac{Hd-1}{2}}\int_{\R^{md}}\int_{D_m} \Big(\prod^m_{i=1} \widehat{f}(x_i-x_{i+1})\Big)\\
&\qquad\qquad\times \exp\Big(-\frac{1}{2}\Var\big(\sum\limits^m_{i=1} x_i\cdot (B(s_i)-B(s_{i-1}))\big)\Big)\, ds\, dx.
\end{align*}
The main idea in order to estimate these terms is to replace each product  $\widehat{f}(x_{2i-1}-x_{2i})\widehat{f}(x_{2i}-x_{2i+1})$    by
$\widehat{f}( -x_{2i})\widehat{f}(x_{2i} )= |\widehat{f}(  x_{2i})|^2 $. Then, the differences  $\widehat{f}(x_{2i-1}-x_{2i})- \widehat{f}( -x_{2i})$ and
$\widehat{f}(x_{2i}-x_{2i+1})- \widehat{f}(  x_{2i})$ are bounded by constant multiples of $|x_{2i-1}|^\alpha$ and $|x_{2i+1}|^\alpha$, respectively, for any
$0\le \alpha \le 1$, because  $\widehat f(0)=0$ due to the fact that $f$ has zero integral.  We are going to make these substitutions recursively. To do this, we introduce the following notation.

Let $I^n_{m,0}=I^n_m$. For $k=1,\dots,m$, we define
\begin{align*}
I^n_{m,k}
&=c_{m,d}\, n^{m\frac{Hd-1}{2}}\int_{\R^{md}}\int_{D_m} I_k\, \prod^m_{i=k+1} \widehat{f}(x_i-x_{i+1})\\
&\qquad\qquad\times \exp\Big(-\frac{1}{2}\Var\big(\sum\limits^m_{i=1} x_i\cdot (B(s_i)-B(s_{i-1}))\big)\Big)\, ds\, dx,
\end{align*}
where 
\[
I_k =
\begin{cases}
\prod\limits^{\frac{k-1}{2}}_{j=1}|\widehat{f}(x_{2j})|^2 \widehat{f}(-x_{k+1}), & \text{if } k \text{ is odd}; \\
\prod\limits^{\frac{k}{2}}_{j=1}|\widehat{f}(x_{2j})|^2, & \text{if }k\text{ is even}.
\end{cases}
\]
The following proposition controls the difference between $I^n_{m,k-1}$ and $ I^n_{m,k}$. We fix a positive constant $\gamma$ such that
\begin{equation}  \label{gamma}
 \gamma<
\begin{cases}
\frac{1-Hd }{2} & \text{if } 1-Hd\leq H; \\
\frac{2H-1-Hd}{2} & \text{if } H< 1-Hd< 2H.
\end{cases}
\end{equation}

\begin{proposition} \label{prop1} 
For $k=1,2,\dots,m$, there exists a positive constant $c$, which depends on $\gamma$, such that
\[
|I^n_{m,k-1}-I^n_{m,k}|\leq c\, n^{-\gamma} (b-a)^{m\frac{1-Hd}{2}-\gamma}.
\]
\end{proposition}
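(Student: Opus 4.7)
First, I would identify the single factor in which the integrands of $I^n_{m,k-1}$ and $I^n_{m,k}$ differ. From the recursive definition of $I_k$, a case check gives: for $k$ odd the replacement is $\widehat{f}(x_k-x_{k+1}) \leftrightarrow \widehat{f}(-x_{k+1})$, and for $k$ even it is $\widehat{f}(x_k-x_{k+1}) \leftrightarrow \widehat{f}(x_k)$. The difference $I^n_{m,k-1}-I^n_{m,k}$ can then be written as a single integral carrying the common factor $I_{k-1}(x)\prod_{i=k+1}^{m}\widehat{f}(x_i-x_{i+1})\exp(-\tfrac12\Var(\sum_i x_i\cdot(B(s_i)-B(s_{i-1}))))$ multiplied by the replacement error $D_k := \widehat{f}(x_k-x_{k+1}) - \widehat{f}(-x_{k+1})$ (for $k$ odd) or $D_k := \widehat{f}(x_k-x_{k+1}) - \widehat{f}(x_k)$ (for $k$ even).

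The pointwise estimate on $D_k$ uses $\widehat{f}(0)=0$: from $|\widehat{f}(y)-\widehat{f}(y')|\leq 2|y-y'|^\alpha\int |f(x)||x|^\alpha\,dx$, valid for every $\alpha\in[0,\min(1,\beta)]$ (here $\beta=1/H-d$), one obtains $|D_k|\leq c|x_k|^\alpha$ for $k$ odd and $|D_k|\leq c|x_{k+1}|^\alpha$ for $k$ even. In the new regime $\beta>1$ (i.e.\ $H<1/(d+1)$), $\widehat{f}$ is $C^1$ at the origin, and a first-order Taylor expansion enlarges the admissible range of $\alpha$ to $[1,\min(2,\beta)]$; the linear principal term so isolated is processed by integration by parts, transferring the extra factor of $x_k$ (or $x_{k+1}$) onto $\exp(-\tfrac12\Var(\cdot))$ and reducing it to a sum of terms of the same structure as the original integrand.

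Next I would apply the local nondeterminism bound \eqref{lndp} to majorize the Gaussian density by $\exp(-\tfrac{k_H}{2}\sum_i|x_i|^2 u_i^{2H})$, with $u_i=s_i-s_{i-1}$ and $s_0=0$. Integrating the extra factor $|x_k|^\alpha$ (or $|x_{k+1}|^\alpha$) against this majorant introduces an additional power $u_k^{-\alpha H}$ on top of the baseline $u^{-Hd}$, requiring the integrability condition $\alpha H+Hd<1$, i.e.\ $\alpha<\beta$. To recover the correct $(b-a)^{m(1-Hd)/2}$ baseline from the remaining $\widehat{f}(x_i-x_{i+1})$ factors---rather than the much weaker bound one would get from $|\widehat{f}|\leq\|f\|_{L^1}$---I would retain their Fourier structure through a weighted Cauchy--Schwarz argument built on $\|f\|_\beta^2=c_{\beta,d}\int|\widehat{f}(x)|^2|x|^{-\beta-d}\,dx$. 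A final change of variables $s_i=nv_i$ collects the factor $n^{-\alpha H}$ from the rescaling of $u_k^{-\alpha H}$, and the simplex integration in $v$ on $\{a<v_1<\cdots<v_m<b\}$ produces $(b-a)^{m(1-Hd)/2-\alpha H}$, giving the desired bound $c\,n^{-\gamma}(b-a)^{m(1-Hd)/2-\gamma}$ with $\gamma=\alpha H$.

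The two cases in \eqref{gamma} arise by optimizing over $\alpha$: when $\beta\leq 1$, the Hölder/integrability constraints restrict $\alpha$ to $[0,\beta)$, and the interplay with the weighted Fourier estimate on the remaining factors limits the net gain to $\gamma<(1-Hd)/2$; when $\beta>1$, the Taylor device allows $\alpha$ above $1$, but the extra gradient produced by the integration by parts on the linear term consumes part of the gain, leaving only $\gamma<(2H-1-Hd)/2$. The main obstacle is precisely this last step: the careful Fourier bookkeeping required to combine the $L^2$-weighted estimates on the remaining factors with the gain from $D_k$ and with the integration-by-parts contribution, while tracking all $n$ and $(b-a)$ powers, is the delicate new ingredient of the present paper. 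The lower bound $H>1/(d+2)$ is exactly what keeps $(2H-1-Hd)/2$ strictly positive, so that the proposition is non-vacuous in the newly treated range.
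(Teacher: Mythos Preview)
Your identification of the single replaced factor and the bound $|D_k|\le c\,|x_k|^\alpha$ (resp.\ $|x_{k+1}|^\alpha$), together with the passage to a product Gaussian via local nondeterminism, matches the paper's opening move. The divergence comes immediately after, and there the proposal has real gaps.

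The paper does \emph{not} keep the remaining factors $\widehat f(x_i-x_{i+1})$ and appeal to a weighted $L^2$ Cauchy--Schwarz. Instead it bounds \emph{every} surviving $|\widehat f(x_i-x_{i+1})|$ by $c_{\alpha_i}(|x_i|^{\alpha_i}+|x_{i+1}|^{\alpha_i})$ (the same H\"older device, using $\widehat f(0)=0$), expands into a finite sum of monomials $\prod_i|x_i|^{\beta_i}$, integrates each $x_i$ against its Gaussian to obtain $\prod_i u_i^{-Hd-H\beta_i}$, and then integrates over $u\in[0,n(b-a)]^m$. The resulting exponent of $n(b-a)$ is $m(1-Hd)-H\sum_i\alpha_i$, and the entire argument is the \emph{choice} of the $\alpha_i$: one takes $\alpha_1=\min\bigl(1,\tfrac{1-Hd}{H}-\epsilon\bigr)$ for the difference factor and $\alpha_i=\tfrac{1-Hd}{2H}-\epsilon$ for all the others. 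The two cases in \eqref{gamma} are exactly whether the optimal $\alpha_1=\tfrac{1-Hd}{H}$ is at most $1$ or is capped at $1$ by the H\"older constraint---no Taylor expansion, no integration by parts, no gradient term.

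This leaves two genuine gaps in your scheme. First, the ``weighted Cauchy--Schwarz built on $\|f\|_\beta^2$'' is the load-bearing step and is not carried out; because the arguments $x_i-x_{i+1}$ chain consecutive variables, it is unclear how any Cauchy--Schwarz splitting decouples them while still producing the power $(n(b-a))^{(m-1)(1-Hd)/2}$ you need (the trivial bound $|\widehat f|\le\|f\|_{L^1}$ leaves a \emph{growing} power of $n$ once $m\ge 2$, so something substantive is required here, and the paper's tuning of the $\alpha_i$'s for $i\ge 2$ is precisely what supplies it). Second, the Taylor-plus-integration-by-parts device for $\beta>1$ is not needed and is not what produces the second line of \eqref{gamma}; your attribution of the smaller $\gamma$ in that regime to ``the extra gradient consumes part of the gain'' misidentifies the mechanism, which is simply that $\alpha_1$ cannot exceed $1$ while the ideal value $\tfrac{1-Hd}{H}$ now does.
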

 
\begin{proof} The proof will be done in several steps.

\noindent \textit{Step 1.} Suppose first that $k=1$. 
Applying the local nondeterminism property  (\ref{lndp}) and making the change of variable $u_1=s_1$, $u_i =s_i -s_{i-1}$, for $2\le i\le m$, we can show that $|I^n_{m,0}-I^n_{m,1}|$ is less than a constant multiple of
\[
n^{m\frac{Hd-1}{2}}\int_{\R^{md}}\int_{O_m} \big|\widehat{f}(x_1-x_2)-\widehat{f}(-x_2)\big| \Big(\prod^m_{i=2}  |\widehat{f}(x_i-x_{i+1})|\Big)\, \exp\Big(-\frac{\kappa_H}{2} \sum\limits^m_{i=1} |x_i|^2u^{2H}_i\Big)\, du\, dx,
\]
where 
\[
O_m=\big\{(u_1, \dots, u_m):\,  0<u_i<n(b-a),\, i=1,\cdots, m \big\}.
\]  

Taking into account that  that $|\widehat{f}(x)|\leq c_{\alpha}|x|^{\alpha}$ for $\alpha\in[0,1]$, we obtain
\begin{align} \label{eq1} \nonumber
|I^n_{m,0}-I^n_{m,1}|
&\leq c_1\, n^{m\frac{Hd-1}{2}}\int_{\R^{md}}\int_{O_m} |x_1|^{\alpha_1} \prod^{m-1}_{i=2} (|x_i|^{\alpha_i}+|x_{i+1}|^{\alpha_i})\, |x_m|^{\alpha_m}\\ \nonumber
&\qquad\qquad\times \exp\Big(-\frac{\kappa_H}{2} \sum\limits^m_{i=1} |x_i|^2u^{2H}_i\Big)\, du\, dx\\
&= c_1\, n^{m\frac{Hd-1}{2}} \sum_{S}  \int_{\R^{md}}\int_{O_m} |x_1|^{\alpha_1} \prod^{m-1}_{i=2} (|x_i|^{p_i\alpha_i}|x_{i+1}|^{\overline{p}_i\alpha_i})\, |x_m|^{\alpha_m}\\ \nonumber
&\qquad\qquad\times \exp\Big(-\frac{\kappa_H}{2} \sum\limits^m_{i=1} |x_i|^2u^{2H}_i\Big)\, du\, dx,  
\end{align}
where $S=\big\{p_i, \overline{p}_i:  p_i\in \{0,1\}, p_i+\overline{p}_i=1,\, i=2,\dots,m-1\big\}$ and the $\alpha_i$s are constants in $[0,1]$.

Rewriting the right hand side of \eref{eq1} gives
\begin{align*}
|I^n_{m,0}-I^n_{m,1}|
&\leq c_1\, n^{m\frac{Hd-1}{2}} \sum_{S} \int_{\R^{md}}\int_{O_m}|x_1|^{\alpha_1}|x_2|^{p_2\alpha_2} \Big(\prod^{m-1}_{i=3} |x_i|^{\overline{p}_{i-1}\alpha_{i-1}+p_i\alpha_i}\Big) |x_m|^{\overline{p}_{m-1}\alpha_{m-1}+\alpha_m}\\
&\qquad\qquad\times \exp\Big(-\frac{\kappa_H}{2} \sum\limits^m_{i=1} |x_i|^2u^{2H}_i\Big)\, du\, dx.
\end{align*}
Integrating with respect to $x$ gives
\begin{align*}
|I^n_{m,0}-I^n_{m,1}|
&\leq c_2\, n^{m\frac{Hd-1}{2}} \sum_{S} \int_{O_m}  u_1 ^{-Hd-H\alpha_1} u_2 ^{-Hd-Hp_2\alpha_2} \prod^{m-1}_{i=3}  u_i ^{-Hd-H(\overline{p}_{i-1}\alpha_{i-1}+p_i\alpha_i)}\\
&\qquad\qquad \times  u_m ^{-Hd-H(\overline{p}_{m-1}\alpha_{m-1}+\alpha_m)}\, du.
\end{align*}
Assume that
\[
1-Hd-H\alpha_1>0, \, 1-Hd-Hp_2\alpha_2>0,\, 1-Hd-H(\overline{p}_{m-1}\alpha_{m-1}+\alpha_m)>0
\]
and
\[
1-Hd-H(\overline{p}_{i-1}\alpha_{i-1}+p_i\alpha_i)>0\quad\text{for}\;\, i=3,\dots, m-1.
\]
Then
\[
|I^n_{m,0}-I^n_{m,1}|
\leq c_3\, n^{m\frac{Hd-1}{2}} (nb-na)^{m(1-Hd)-H\sum\limits^m_{i=1}\alpha_i}.
\]
Fix $\epsilon  >0$. We choose $\alpha_1=  \frac{1-Hd}{H}-\epsilon$  if $1-Hd\leq H$.  Otherwise, we let $\alpha_1=1$.  For $i=2,\dots, m$, we choose $\alpha_i=\frac{1-Hd}{2H}-\epsilon$. With these choices of $\alpha_i$s, we   obtain
\[
  m\frac{Hd-1}{2}+m(1-Hd)-H\sum^m_{i=1}\alpha_i=
\begin{cases}
\frac{Hd-1}{2} + m H\epsilon   & \text{if } 1-Hd\leq H; \\
\frac{1-Hd-2H}{2}  + (m-1) H\epsilon & \text{if } 1-Hd> H.
\end{cases}
\]
Thus we can choose $\epsilon$ such that $ m\frac{Hd-1}{2}+m(1-Hd)-H\sum\limits^m_{i=1}\alpha_i =-\gamma$, and
\[
|I^n_{m,0}-I^n_{m,1}| \leq c_4\, n^{-\gamma} (b-a)^{m\frac{1-Hd}{2}-\gamma},
\]
which is the desired estimation.

\noindent \textit{Step 2:} Suppose now that $k=2$. 
By the definition of $I^n_{m,1}$ and $I^n_{m,2}$, $|I^n_{m,1}-I^n_{m,2}|$ is less than a constant multiple of
\[
n^{m\frac{Hd-1}{2}}\int_{\R^{md}}\int_{O_m} \big|\widehat{f}(-x_2)\big|\big|\widehat{f}(x_2-x_3)-\widehat{f}(x_2)\big| \prod^m_{i=3}  |\widehat{f}(x_i-x_{i+1})|\, \exp\Big(-\frac{\kappa_H}{2} \sum\limits^m_{i=1} |x_i|^2u^{2H}_i\Big)\, du\, dx.
\]
Using similar arguments as in Step 1,
 \begin{align*}
 |I^n_{m,1}-I^n_{m,2}|
&\leq c_5\, n^{m\frac{Hd-1}{2}} \sum_{S_1} \int_{\R^{md}}\int_{O_m} |x_2|^{\alpha_1}|x_3|^{\alpha_2} \Big(\prod^{m-1}_{i=3}  |x_i|^{p_i\alpha_i}|x_{i+1}|^{\overline{p}_i\alpha_i}\Big) |x_m|^{\alpha_m}\\
&\qquad\qquad\times \exp\Big(-\frac{\kappa_H}{2} \sum\limits^m_{i=1} |x_i|^2u^{2H}_i\Big)\, du\, dx\\
&\leq c_6\, \sum_{S_1} n^{m\frac{Hd-1}{2}}\int_{O_m} |u_1|^{-Hd}|u_2|^{-Hd-H\alpha_1}|u_3|^{-Hd-H(\alpha_2+p_3\alpha_3)}\\
&\qquad\qquad \times\Big(\prod^{m-1}_{i=4} |u_i|^{-Hd-H(\overline{p}_{i-1}\alpha_{i-1}+p_i\alpha_i)}\Big) |u_m|^{-Hd-H(\overline{p}_{m-1}\alpha_{m-1}+\alpha_m)}\, du,
\end{align*}
where $S_1=\big\{p_i,\, \overline{p}_i: p_i\in\{0,1\}, p_i+\overline{p}_i=1,\, i=3,\cdots, m-1\big\}$. Then we can conclude as in Step 1.

\noindent \textit{Step 3:}
 Suppose that  $k$ is odd and $3\leq k\leq m$.  Since $k$ is odd, $|I^n_{m,k-1}-I^n_{m,k}|$ is less than a constant multiple of
\begin{align*}
&n^{m\frac{Hd-1}{2}}\int_{\R^{md}}\int_{O_m} \Big(\prod^m_{i=k+1}  |\widehat{f}(x_i-x_{i+1})|\Big)\\
&\qquad\quad\times \big|\widehat{f}(x_k-x_{k+1})-\widehat{f}(-x_{k+1})\big|\Big(\prod^{\frac{k-1}{2}}_{j=1}|\widehat{f}(x_{2j})|^2\Big)\, \exp\Big(-\frac{\kappa_H}{2} \sum\limits^m_{i=1} |x_i|^2u^{2H}_i\Big)\, du\, dx.
\end{align*}
Therefore,   $|I^n_{m,k-1}-I^n_{m,k}|$ is less than a constant multiple of
\[
n^{m\frac{Hd-1}{2}}\int_{\R^{md}}\int_{O_m}  \Big(\prod^m_{i=k+1}  |\widehat{f}(x_i-x_{i+1})|\Big)  |x_{k}|^{\alpha_k}\Big(\prod^{\frac{k-1}{2}}_{j=1}|\widehat{f}(x_{2j})|^2\Big) \exp\Big(-\frac{\kappa_H}{2} \sum\limits^m_{i=1} |x_i|^2u^{2H}_i\Big)\, du\, dx.
\]
Integrating with respect to $x_i$s and $u_i$s with $i\leq k-1$ gives
\begin{align*}
&|I^n_{m,k-1}-I^n_{m,k}|\\
&\leq c_7\, (b-a)^{\frac{k-1}{2}(1-Hd) }\, n^{(m-k+1)\frac{Hd-1}{2}}\int_{\R^{(m-k+1)d}}\int_{O_{m,k}}  \Big(\prod^m_{i=k+1}  |\widehat{f}(x_i-x_{i+1})|\Big) |x_k|^{\alpha_k}\\
&\qquad\qquad\times \exp\Big(-\frac{\kappa_H}{2} \sum\limits^m_{i=k} |x_i|^2u^{2H}_i\Big)\, du\, dx,
\end{align*}
where $du=du_k\cdots du_{m}$, $dx=dx_k\cdots dx_{m}$ and 
\[
O_{m,k}=\big\{ (u_k, \dots, u_m):   0\le u_i \le n(b-a),    i=k,\dots,m\big\}.
\]
Applying Step 1 and then doing some algebra, we can obtain
\[
|I^n_{m,k-1}-I^n_{m,k}|\leq c_8\, n^{-\gamma} (b-a)^{m\frac{1-Hd}{2}-\gamma}.
\]

\noindent \textit{Step 4:} The case when $k$ is even and $4\leq k\leq m$ is handled in a similar way.
\end{proof}

\section{Proof of Theorem \ref{th1}}
  The proof  of Theorem \ref{th1} will be done in two steps. We first
show tightness, and then establish the convergence of moments. Tightness will be deduced from the following inequality.

\begin{proposition} \label{tight}
For any $0\leq a<b \leq t$ and any integer $m\geq 1$,
\begin{equation*}
\E\big[(F_{n}(b)-F_{n}(a))^{2m}\big] \leq C\,
(b-a)^{m(1-Hd)-\gamma},
\end{equation*}
where $C$ is a constant depending only on $H$, $m$, $d$ and $f$.
\end{proposition}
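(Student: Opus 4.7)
The plan is to introduce the quantities $I^n_{2m,k}$ of Section 3 (with the upper index $m$ replaced by $2m$) and exploit the telescoping identity
\[
\E\big[(F_n(b)-F_n(a))^{2m}\big]=I^n_{2m,0}=I^n_{2m,2m}+\sum_{k=1}^{2m}\big(I^n_{2m,k-1}-I^n_{2m,k}\big).
\]
Proposition \ref{prop1} applied with $m$ replaced by $2m$ bounds each difference by $c\,n^{-\gamma}(b-a)^{m(1-Hd)-\gamma}$, so, using $n^{-\gamma}\le 1$, the telescoping sum already contributes at most $c\,(b-a)^{m(1-Hd)-\gamma}$. The problem therefore reduces to estimating the fully substituted term $I^n_{2m,2m}$, in which every difference $\widehat f(x_i-x_{i+1})$ has been replaced and the Fourier integrand is simply $\prod_{j=1}^{m}|\widehat f(x_{2j})|^2$.

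For this main estimate I would mimic Step 1 of the proof of Proposition \ref{prop1}: invoke local nondeterminism \eref{lndp} together with the change of variables $u_1=s_1$, $u_i=s_i-s_{i-1}$ ($i\ge 2$), and enlarge each $u$-domain to $(0,n(b-a))$. This bounds $|I^n_{2m,2m}|$ by a constant multiple of
\[
n^{-m(1-Hd)}\int_{\R^{2md}}\int_{(0,n(b-a))^{2m}}\Big(\prod_{j=1}^{m}|\widehat f(x_{2j})|^2\Big)\exp\Big(-\tfrac{\kappa_H}{2}\sum_{i=1}^{2m}|x_i|^2 u_i^{2H}\Big)\,du\,dx.
\]
The Gaussian factor then decouples across indices. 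For each odd index $2j-1$ I would integrate $x_{2j-1}\in\R^d$ first, obtaining $c\,u_{2j-1}^{-Hd}$, and then $u_{2j-1}$ over $(0,n(b-a))$, obtaining $c\,(n(b-a))^{1-Hd}$ (using $Hd<1$). For each even index $2j$ I would instead apply Fubini, extend the $u_{2j}$-integral to $(0,\infty)$, and use a scaling change of variable to evaluate
\[
\int_0^\infty e^{-\frac{\kappa_H}{2}|x_{2j}|^2 u^{2H}}\,du=c\,|x_{2j}|^{-1/H}.
\]
The remaining $x_{2j}$-integral is then $c\int_{\R^d}|\widehat f(x_{2j})|^2|x_{2j}|^{-1/H}\,dx_{2j}$, which by \eref{beta} (with $\beta=\tfrac 1H-d$, so that $\beta+d=1/H$) equals a constant multiple of $\|f\|_{1/H-d}^2$, finite by hypothesis.

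Multiplying together the $m$ odd contributions $(n(b-a))^{1-Hd}$, the $m$ finite even-block constants, and the prefactor $n^{-m(1-Hd)}$ yields $|I^n_{2m,2m}|\le c\,(b-a)^{m(1-Hd)}$; combining with the telescoping bound and using $b-a\le t$ to absorb an extra factor $(b-a)^\gamma$ gives the desired estimate. The genuine obstacle, and the reason Fourier analysis allows us to reach below the threshold $H=1/(d+1)$ of \cite{HNX}, is the even-block computation: the $u$-integral produces exactly the weight $|x|^{-1/H}$ which is integrated against $|\widehat f|^2$, so the admissibility condition $f\in H^{1/H-d}_0$ is precisely what makes the argument close. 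The lower bound $H>1/(d+2)$ enters here as the requirement $\beta<2$ in \eref{beta}.
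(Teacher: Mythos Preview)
Your proof is correct and follows essentially the same approach as the paper: telescope $I^n_{2m,0}$ into $I^n_{2m,2m}$ plus the differences controlled by Proposition \ref{prop1}, then bound $I^n_{2m,2m}$ via local nondeterminism and the decoupled $u$- and $x$-integrals, obtaining $c\,(b-a)^{m(1-Hd)}$ with the even-block contribution $\int_{\R^d}|\widehat f(x)|^2|x|^{-1/H}\,dx$. Your write-up is in fact slightly more explicit than the paper's in separating the odd and even indices and in noting that $b-a\le t$ is used to absorb the extra $(b-a)^\gamma$.
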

\begin{proof} Note that $\E\big[(F_{n}(b)-F_{n}(a))^{2m}\big] =I^n_{2m,0}$. Applying Proposition \ref{prop1} repeatedly gives 
\begin{align} \label{tight1}
I^n_{2m,0}\leq c_1\, n^{-\gamma} (b-a)^{m(1-Hd)-\gamma}+c_1\, I^n_{2m,2m}.
\end{align}
So it suffices to estimate $I^n_{2m,2m}$. By the definition of $I^n_{2m,2m}$, using the same notation as in the proof of Proposition \ref{prop1}, we obtain
\begin{align}  \nonumber
I^n_{2m,2m} 
&=c_2\, n^{m(1-Hd)}\int_{\R^{2md}}\int_{D_{2m}}\Big(\prod^m_{j=1}|\widehat{f}(x_{2j})|^2\Big)\\  \nonumber
&\qquad\qquad\times \exp\bigg(-\frac{1}{2}\Var\Big(\sum\limits^{2m}_{i=1} x_i\cdot\big(B(s_i)-B(s_{i-1})\big)\Big)\bigg)\, ds\, dx\\  \nonumber
&\leq c_3\, n^{m(1-Hd)}\int_{\R^{2md}}\int_{O_{2m}}\Big(\prod^m_{j=1}|\widehat{f}(x_{2j})|^2\Big)\, \exp\Big(-\frac{\kappa_H}{2}\sum\limits^{2m}_{i=1} |x_i|^2u^{2H}_i\Big)\, du\, dx\\ 
&\leq c_4\, (b-a)^{m(1-Hd)}\Big(\int_{\R^d}|\widehat{f}(x)|^2|x|^{-\frac{1}{H}}\, dx\Big)^m. \label{tight2}
\end{align}
Combining \eref{tight1} and \eref{tight2} gives the desired result.
\end{proof}

\bigskip

Next we shall prove the convergence of all finite dimensional distributions. That is, we shall prove that the moments of $F_n(t)$ converge to the
corresponding ones of $W(L_t(0))$.

Fix a finite number of disjoint intervals $(a_i, b_i]$ with
$i=1,\dots ,N$ and
 $b_i\le a_{i+1}$.  Let $\mathbf{m}=(m_1, \dots, m_N)$ be a fixed multi-index with $m_i\in\N$ for $i=1,\dots ,N$.
 Set $\sum\limits_{i=1}^N m_i=|\mathbf{m}|$ and  $\prod\limits_{i=1}^N m_i!=\mathbf{m}!$.
We need to consider the following sequence of random variables
 \[
        G_n=\prod_{i=1}^N \left( F_n(b_i)- F_n(a_i) \right)^{m_i}
 \]
and compute  $\lim\limits_{n\rightarrow  \infty}  \E(G_n) $. Note that the expectation of $G_n$ can be formulated as 
 \[
      \E(G_n) = \mathbf{m}!\, n^{\frac{|\mathbf{m}|(Hd-1)} 2}  \E \Big(
           \int_{D_\mathbf{m}}  \prod_{i=1}^N\prod_{j=1}^{m_i}f(B(s^i_{j}))\, ds \Big),
 \]
where 
\begin{equation}
D_\mathbf{m}=\big\{s \in \R^{|\mathbf{m}|}: na_{i}<s^i_{1}<\cdots <s^i_{m_i}<nb_{i}, 1\le i\le N\big\}. \label{e.3.1.4}
\end{equation}
Here and in the sequel we denote the coordinates of a point $s\in  \R^{|\mathbf{m}|}$ as
$s= (s^i_j)$, where   $ 1\le i \le   N$ and $1\le j \le m_i $.

For simplicity of notation, we define 
\[
J_0=\big\{(i,j): 1\leq i\leq N, 1\leq j\leq m_i \big\}.
\]
For any $(i_1, j_1)$ and $(i_2,j_2)\in J_0$, we define the following dictionary ordering 
\[
(i_1, j_1)\leq (i_2,j_2)
\]
if $i_1<i_2$ or $i_1=i_2$ and $j_1\leq j_2$. For any $(i,j)$ in $J_0$, under the above ordering, $(i,j)$ is the $(\sum\limits^{i-1}_{k=1}m_k+j)$-th element in $J_0$ and we define $\#(i,j)=\sum\limits^{i-1}_{k=1}m_k+j$.

\begin{proposition} \label{odd} Suppose that at least one of the exponents $m_i$ is odd. Then
\begin{equation*}
\lim\limits_{n\to\infty}\E(G_n)=0.
\end{equation*}
\end{proposition}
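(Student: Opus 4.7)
The strategy is to push the Fourier-analytic substitution scheme of Proposition~\ref{prop1} through to the multi-interval setting and then to exploit the identity $\widehat{f}(0)=0$ at the level of the fully substituted leading term.

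First I would re-index the $|\mathbf{m}|$ time and frequency variables by a single index $r=1,\dots,|\mathbf{m}|$ via the dictionary order $\#(\cdot)$ on $J_0$, apply Fourier inversion to each factor $f(B(s^i_j))$, and change variables $x_r=\sum_{r'\ge r}y_{r'}$ (with $x_{|\mathbf{m}|+1}=0$), producing
\[
\E(G_n) = c_{\mathbf{m},d}\, n^{|\mathbf{m}|(Hd-1)/2}\int_{\R^{|\mathbf{m}|d}}\!\!\int_{D_\mathbf{m}}\prod_{r=1}^{|\mathbf{m}|}\widehat{f}(x_r-x_{r+1})\, e^{-\frac12\Var(\sum_r x_r\cdot(B(s_r)-B(s_{r-1})))}\,ds\,dx,
\]
in direct analogy with the derivation of $I^n_m$ in Section~3. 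Then I would define the truncated quantities $I^n_{\mathbf{m},k}$, $k=0,\dots,|\mathbf{m}|$, by the same pair-substitution rule (replacing $\widehat{f}(x_{2j-1}-x_{2j})\widehat{f}(x_{2j}-x_{2j+1})$ with $|\widehat{f}(x_{2j})|^2$) and apply Proposition~\ref{prop1}. Its proof extends verbatim to the domain $D_\mathbf{m}$, since it invokes only the local nondeterminism bound \eref{lndp} and the pointwise estimate $|\widehat{f}(x)|\le c_\alpha|x|^\alpha$, neither of which is sensitive to how the $s_r$'s are distributed among the intervals $[na_i,nb_i]$. Consequently $\E(G_n)=I^n_{\mathbf{m},|\mathbf{m}|}+O(n^{-\gamma})$, and it remains to show $I^n_{\mathbf{m},|\mathbf{m}|}\to 0$.

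If $|\mathbf{m}|$ is odd, then at the final step of the iteration the integrand carries the hanging factor $\widehat{f}(-x_{|\mathbf{m}|+1})=\widehat{f}(0)$, which vanishes because $\int f=0$. Hence $I^n_{\mathbf{m},|\mathbf{m}|}\equiv 0$ and the claim follows immediately. If $|\mathbf{m}|$ is even but some $m_{i_0}$ is odd, pick the smallest such index $i_0$; then $M_{i_0}:=\sum_{i\le i_0}m_i$ is odd and the pair-substitution scheme produces a pair $(x^{i_0}_{m_{i_0}},x^{i_0+1}_1)$ straddling the boundary $i_0\to i_0+1$, contributing the factor $|\widehat{f}(x^{i_0+1}_1)|^2$. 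The corresponding time gap $u^{i_0+1}_1=s^{i_0+1}_1-s^{i_0}_{m_{i_0}}\ge n(a_{i_0+1}-b_{i_0})$ is of order $n$, so using $|\widehat{f}(x)|\le c_\alpha|x|^\alpha$ together with local nondeterminism I would bound
\[
\int_{\R^d}|\widehat{f}(x^{i_0+1}_1)|^2 e^{-\frac{\kappa_H}{2}|x^{i_0+1}_1|^2(u^{i_0+1}_1)^{2H}}\,dx^{i_0+1}_1 \;\le\; c\,(u^{i_0+1}_1)^{-Hd-2H\alpha}.
\]
Choosing $\alpha$ with $(1-Hd)/(2H)<\alpha\le\min(1,(1-Hd)/H)$, which is possible because $H>\frac1{d+2}$, the subsequent integration over $u^{i_0+1}_1$ yields a factor of order $n^{1-Hd-2H\alpha}\to 0$. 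Bounding the remaining $x$- and $u$-integrations with the same scheme as in Proposition~\ref{tight} then delivers $|I^n_{\mathbf{m},|\mathbf{m}|}|\le c\,n^{1-Hd-2H\alpha}\to 0$.

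The main technical obstacle lies in the second case: one must verify that the $\alpha$ giving cross-boundary decay is consistent with the convergence of the neighbouring $u$-integrations, in the same spirit as the selection of the exponents $\alpha_i$ at the end of Step~1 of Proposition~\ref{prop1}, where the conditions $1-Hd-H(\overline{p}_{i-1}\alpha_{i-1}+p_i\alpha_i)>0$ had to be balanced simultaneously. Once that accounting is carried out, combining the $O(n^{-\gamma})$ iteration error with the $o(1)$ bound on $I^n_{\mathbf{m},|\mathbf{m}|}$ closes the proof.
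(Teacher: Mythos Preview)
Your proposal follows the same route as the paper: express $\E(G_n)$ via Fourier inversion and the increment change of variables, iterate Proposition~\ref{prop1} down to the fully substituted term $I^n_{\mathbf m,|\mathbf m|}$, kill the odd-$|\mathbf m|$ case through $\widehat f(0)=0$, and in the even-$|\mathbf m|$ case exploit the cross-boundary pair at the first odd $m_\ell$ via $|\widehat f(x)|\le c_\alpha|x|^\alpha$ with $\alpha>(1-Hd)/(2H)$ to extract the factor $n^{1-Hd-2H\alpha}\to 0$. The one point needing care is that the hypothesis only assumes $b_{i_0}\le a_{i_0+1}$, so the cross-boundary gap $u^{i_0+1}_1$ is not in general bounded below by a positive multiple of $n$; the paper handles this by staying in $s$-coordinates and integrating first $s^{\ell+1}_1$ over $(na_{\ell+1},nb_{\ell+1})$ to get $(na_{\ell+1}-s^\ell_{m_\ell})^{1-Hd-2H\alpha}$, then $s^\ell_{m_\ell}$ over $(s^\ell_{m_\ell-1},nb_\ell)$ against $(s^\ell_{m_\ell}-s^\ell_{m_\ell-1})^{-Hd}$, which remains finite under the additional constraint $\alpha<(2-Hd)/(2H)$ and still delivers the same power of $n$.
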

\begin{proof}  Using Fourier transform, we see that $\E (G_n)$ is equal to
\begin{align*}
 \frac{\mathbf{m}!}{(2\pi)^{|\mathbf{m}|d}}\,  n^{\frac{|\mathbf{m}|(Hd-1)}{2}}  \int_{\R^{|\mathbf{m}|d}}  \int_{D_\mathbf{m}} 
           \Big( \prod^{N}_{i=1}\prod^{m_i}_{j=1} \widehat{f}(y^i_j)\Big) \exp\bigg(-\frac{1}{2}\Var\Big( \sum^{N}_{i=1}\sum^{m_i}_{j=1} y^i_j\cdot B(s^i_j) \Big)\bigg) \, ds\, dy.
\end{align*}
Making the
change of variables $x^i_j=\sum\limits_{(\ell,k)\geq (i,j)} y^{\ell}_k$ for $1\leq i \leq N$ and $1\leq j\leq m_i$,
\begin{align*}
\E (G_n)
&=\frac{\mathbf{m}!}{(2\pi)^{|\mathbf{m}|d}}\,  n^{\frac{|\mathbf{m}|(Hd-1)}{2}}  \int_{\R^{|\mathbf{m}|d}}  \int_{D_\mathbf{m}} 
            \prod^{N}_{i=1}\prod^{m_i}_{j=1} \widehat{f}(x^i_j-x^i_{j+1})\\
&\qquad\qquad\times \exp\bigg(-\frac{1}{2}\Var\Big( \sum^{N}_{i=1}\sum^{m_i}_{j=1} x^i_j\cdot \big(B(s^i_j)-B(s^i_{j-1})\big) \Big)\bigg) \, ds\, dx.
\end{align*}
Applying Proposition \ref{prop1},  we   obtain
\begin{align*}
\lim_{n\to\infty}\E (G_n)
&=\frac{\mathbf{m}!}{(2\pi)^{|\mathbf{m}|d}}\,  \lim_{n\to\infty} n^{\frac{|\mathbf{m}|(Hd-1)}{2}}  \int_{\R^{|\mathbf{m}|d}}  \int_{D_\mathbf{m}} 
           \bigg( \prod_{(i,j)\in J_e} |\widehat{f}(x^i_j)|^2\bigg)\, I_{|\mathbf{m}|}\\
&\qquad\qquad\times \exp\bigg(-\frac{1}{2}\Var\Big( \sum^{N}_{i=1}\sum^{m_i}_{j=1} x^i_j\cdot \big(B(s^i_j)-B(s^i_{j-1})\big) \Big)\bigg) \, ds\, dx,
\end{align*}
where $J_e=\big\{(i,j)\in J_0: \#(i,j)\; \text{is even}\big\}$ and 
\[
I_{|\mathbf{m}|}=
\begin{cases}
\widehat{f}(x^N_{m_N}), & \text{if}\; |\mathbf{m}| \; \text{is odd};\\
1, & \text{if}\; |\mathbf{m}| \; \text{is even}.
\end{cases}
\]

It is easy to see that $\lim\limits_{n\to\infty}\E (G_n)=0$ when $|\mathbf{m}|$ is odd. We shall show $\lim\limits_{n\to\infty}\E (G_n)=0$ when $|\mathbf{m}|$ is even. In this case,
\begin{align*}
\lim_{n\to\infty}\E (G_n)
&=\frac{\mathbf{m}!}{(2\pi)^{|\mathbf{m}|d}}\,  \lim_{n\to\infty} n^{\frac{|\mathbf{m}|(Hd-1)}{2}}  \int_{\R^{|\mathbf{m}|d}}  \int_{D_\mathbf{m}} 
           \Big( \prod_{(i,j)\in J_e} |\widehat{f}(x^i_j)|^2\Big)\\
&\qquad\qquad\times \exp\bigg(-\frac{1}{2}\Var\Big( \sum^{N}_{i=1}\sum^{m_i}_{j=1} x^i_j\cdot \big(B(s^i_j)-B(s^i_{j-1})\big) \Big)\bigg) \, ds\, dx.
\end{align*}
Note that the right hand side of the above equality is positive. Using the local nondeterminism property (\ref{lndp}),
\begin{align*}
\big|\lim_{n\to\infty}\E (G_n)\big|
&\leq c_1  \limsup_{n\to\infty} n^{\frac{|\mathbf{m}|(Hd-1)}{2}}  \int_{\R^{|\mathbf{m}|d}}  \int_{D_\mathbf{m}} 
           \Big( \prod_{(i,j)\in J_e} |\widehat{f}(x^i_j)|^2\Big)\\
&\qquad\qquad\times \exp\bigg(-\frac{\kappa_H}{2}\sum^{N}_{i=1}\sum^{m_i}_{j=1} |x^i_j|^2(s^i_j-s^i_{j-1})^{2H}\bigg) \, ds\, dx\\
&:= c_1 \limsup_{n\to\infty} I_n.
\end{align*}

Assume that $m_{\ell}$ is the first odd exponent. Integrating with respect to proper $x^i_j$s and $s^i_j$s gives
\begin{align*}
I_n &\leq c_2\, n^{Hd-1} \sup_{s^{\ell}_{m_{\ell}-1}\in (na_\ell,nb_{\ell}]} \int_{\R^{d}}  \int^{nb_{\ell}}_{s^{\ell}_{m_{\ell}-1}}\int^{nb_{\ell+1}}_{n_{a_{\ell+1}}}
           |\widehat{f}(x^{\ell+1}_1)|^2(s^{\ell}_{m_{\ell}}-s^{\ell}_{m_{\ell}-1})^{-Hd}\\
&\qquad\qquad\times   \exp\Big(-\frac{\kappa_H}{2} |x^{\ell+1}_1|^2(s^{\ell+1}_1-s^{\ell}_{m_{\ell}})^{2H}\Big) \, ds^{\ell+1}_1\, ds^{\ell}_{m_{\ell}}\, dx^{\ell+1}_1.
\end{align*}
Note that $|\widehat{f}(x)|\leq c_{\alpha}|x|^{\alpha}$ for $\alpha\in[0,1]$. Choosing $\alpha\in(\frac{1-Hd}{2H},\frac{2-Hd}{2H})$ gives
\begin{align*}
I_n &\leq c_3\, n^{Hd-1} \sup_{s^{\ell}_{m_{\ell}-1}\in (na_\ell,nb_{\ell}]} \int_{\R^{d}}  \int^{nb_{\ell}}_{s^{\ell}_{m_{\ell}-1}}\int^{nb_{\ell+1}}_{n_{a_{\ell+1}}}
           |x^{\ell+1}_1|^{2\alpha}(s^{\ell}_{m_{\ell}}-s^{\ell}_{m_{\ell}-1})^{-Hd}\\
&\qquad\qquad\times   \exp\Big(-\frac{\kappa_H}{2} |x^{\ell+1}_1|^2(s^{\ell+1}_1-s^{\ell}_{m_{\ell}})^{2H}\Big) \, ds^{\ell+1}_1\, ds^{\ell}_{m_{\ell}}\, dx^{\ell+1}_1\\ 
&\leq c_4\, n^{Hd-1} \sup_{s^{\ell}_{m_{\ell}-1}\in (na_\ell,nb_{\ell}]} \int^{nb_{\ell}}_{s^{\ell}_{m_{\ell}-1}}\int^{nb_{\ell+1}}_{n_{a_{\ell+1}}}
           (s^{\ell+1}_1-s^{\ell}_{m_{\ell}})^{-Hd-2H\alpha}(s^{\ell}_{m_{\ell}}-s^{\ell}_{m_{\ell}-1})^{-Hd}\, ds^{\ell+1}_1\, ds^{\ell}_{m_{\ell}}\\    
&\leq c_5\, n^{Hd-1} \sup_{s^{\ell}_{m_{\ell}-1}\in (na_\ell,nb_{\ell}]} \int^{nb_{\ell}}_{s^{\ell}_{m_{\ell}-1}}
           (na_{\ell+1}-s^{\ell}_{m_{\ell}})^{1-Hd-2H\alpha}(s^{\ell}_{m_{\ell}}-s^{\ell}_{m_{\ell}-1})^{-Hd}\, ds^{\ell}_{m_{\ell}}\\     
&\leq c_6\, n^{1-Hd-2H\alpha},               
\end{align*}
where we used $b_{\ell}\leq a_{\ell+1}$ in the last inequality. Therefore, 
\[
\big|\lim\limits_{n\to\infty}\E (G_n)\big|\leq c_6\lim_{n\to\infty} n^{1-Hd-2H\alpha}=0.
\]
\end{proof}

\bigskip
 Consider now  the convergence of moments when all exponents $m_i$ are even.  
 \begin{proposition} \label{even} Suppose that all exponents $m_i$ are even. Then
 \begin{equation}  \label{c1}
\lim_{n\to\infty}\E(G_n)=C_{H,d}^{\frac{|\mathbf{m}|}{2}}\, \Vert
f\Vert^{ |\mathbf{m}| }_{\frac{1}{H}-d }\,\, \mathbb{E} \Big(
\prod_{i=1}^N   \big( W(L_{b_i}(0))- W(L_{a_i}(0)) \big)^{m_i}
\Big),
\end{equation}
where the expectation in the right-hand side of the above equation is given by formula  (\ref{2.1}).
\end{proposition}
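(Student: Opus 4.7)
The plan is to carry out the proof in three stages: reduction via Proposition \ref{prop1}, a two-scale change of variables that isolates short intra-pair gaps from macroscopic positions, and integration of the asymptotically decoupled Gaussian density.

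First, as in the proof of Proposition \ref{odd}, I apply Proposition \ref{prop1} iteratively $|\mathbf{m}|$ times to replace the product $\prod_{i,j}\widehat{f}(x^i_j-x^i_{j+1})$ in the Fourier representation of $\E(G_n)$ by $\prod_{(i,j)\in J_e}|\widehat{f}(x^i_j)|^2$, at the cost of a negligible $O(n^{-\gamma})$ error. Since all $m_i$ are even, $|\mathbf{m}|$ is even and the residual factor $I_{|\mathbf{m}|}=1$, so no extra $\widehat{f}(x^N_{m_N})$ survives. The task reduces to computing $\lim_{n\to\infty}\Phi_n$, where
\[
\Phi_n = \frac{\mathbf{m}!}{(2\pi)^{|\mathbf{m}|d}}\,n^{\frac{|\mathbf{m}|(Hd-1)}{2}} \int_{\R^{|\mathbf{m}|d}}\int_{D_{\mathbf{m}}} \prod_{(i,j)\in J_e}|\widehat{f}(x^i_j)|^2\,e^{-V_n(s,x)/2}\,ds\,dx.
\]

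Second, I introduce $u^i_k = n(s^i_{2k}-s^i_{2k-1})$, $v^i_k = s^i_{2k}$ for $1\le k\le m_i/2$ (Jacobian $n^{-|\mathbf{m}|/2}$), followed by the rescaling $w^i_k = v^i_k/n$ (Jacobian $n^{|\mathbf{m}|/2}$, canceling the previous). The $w^i_k$ then lie in the ordered simplex $a_i<w^i_1<\cdots<w^i_{m_i/2}<b_i$, while $u^i_k\in(0,n^2(w^i_k-w^i_{k-1}))$, expanding to $(0,\infty)$ as $n\to\infty$. The key step is to show that the quadratic form in the exponent asymptotically decouples:
\[
V_n \longrightarrow \sum_{i,k}|x^i_{2k}|^2(u^i_k/n)^{2H} + \Var\Big(\sum_{i,k}x^i_{2k-1}\cdot(B(nw^i_k)-B(nw^i_{k-1}))\Big).
\]
The cross covariances between different short increments are $O(n^{-2})$ by Taylor expansion of $|t|^{2H}$ and hence negligible compared to the $O(n^{-2H})$ diagonal contributions, while the short-long cross terms are controlled similarly via local nondeterminism \eqref{lndp}. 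Dominated convergence, justified by the a priori bound from Proposition \ref{tight}, passes the limit inside the integral.

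Third, I evaluate the decoupled integrals. Integrating out the odd-indexed $x^i_{2k-1}$ against the long-scale Gaussian yields $(2\pi)^{|\mathbf{m}|d/4}/\det(A_n(w))^{1/2}$, where $A_n(w)$ is the covariance of $(B(nw^i_k))$ (the triangular change from increments to positions preserves the determinant). By self-similarity, $\det(A_n(w)) = n^{Hd|\mathbf{m}|}\det(A(w))$ with $A(w)$ as in Lemma \ref{lema2}. For each pair, the substitution $\tau=u^i_k/n$ produces a factor of $n$ and leads, in the limit, to
\[
\int_0^\infty\!\!\int_{\R^d}|\widehat{f}(x)|^2 e^{-|x|^2\tau^{2H}/2}\,dx\,d\tau \;=\; \Big(\int_0^\infty e^{-\eta^{2H}/2}\,d\eta\Big)\int_{\R^d}|\widehat{f}(x)|^2|x|^{-1/H}\,dx,
\]
which via \eqref{beta} equals a constant multiple of $\|f\|^2_{\frac{1}{H}-d}$. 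All the collected powers of $n$ cancel exactly (total $n^0$). Matching the remaining constants (via the Fourier identity defining $c_{\beta,d}$ and the explicit form of $C_{H,d}$), together with the combinatorial factor from symmetrizing the ordered $w$-simplex into the unordered product $\prod_i[a_i,b_i]^{m_i/2}$ used in Lemma \ref{lema2}, reproduces formula \eqref{c1}.

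The principal difficulty is the asymptotic decoupling of $V_n$ in the second step. The fractional Brownian motion has nontrivial correlations between increments at all scales, so the cross terms do not vanish identically at finite $n$; rigorously showing that their contribution to the integral vanishes in the limit requires a careful block-matrix analysis combined with local nondeterminism, establishing that once integrated the cross-scale contributions are negligible relative to the diagonal terms.
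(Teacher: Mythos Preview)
Your first step---iterating Proposition \ref{prop1} to reduce $\E(G_n)$ to the integral $\Phi_n$ involving $\prod_{(i,j)\in J_e}|\widehat{f}(x^i_j)|^2$---matches the paper exactly, and your power-counting in Step 3 is correct. The divergence from the paper comes afterward: the paper does \emph{not} compute the limit of $\Phi_n$ directly. Instead it invokes the computation already done in \cite{HNX} and observes that the only place that earlier argument fails for $H\le \frac{1}{d+1}$ is the truncation estimate \eqref{c2}, namely that the contribution from the region $\{s_{2\ell}-s_{2\ell-1}>K\}$ vanishes as $K\to\infty$. The body of the paper's proof is devoted to re-proving \eqref{c2} with the Fourier method of Proposition \ref{prop1}, which now works down to $H>\frac{1}{d+2}$.

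Your direct route via the two-scale change of variables and ``asymptotic decoupling of $V_n$'' is essentially a sketch of the \cite{HNX} computation, but it has a genuine gap precisely where the truncation is needed. The appeal to dominated convergence ``justified by the a priori bound from Proposition \ref{tight}'' is not valid: that proposition gives a uniform bound on the \emph{value} of the moment, not an integrable majorant for the integrand in the variables $(w,\tau,x)$. If one tries to build a majorant from local nondeterminism, the odd-gap factor becomes $(n(w^i_k-w^i_{k-1})-\tau^i_k)^{-Hd}$, which does not separate into an $n$-independent bound when $\tau^i_k$ is comparable to $n(w^i_k-w^i_{k-1})$. In that regime the ``short'' increment is no longer microscopic, the block decoupling of $V_n$ breaks down, and the cross terms you dismiss as $O(n^{-2})$ are no longer negligible. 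The truncation $s_{2\ell}-s_{2\ell-1}\le K$ is exactly what confines $\tau^i_k$ to a bounded set so that the decoupling and the limit exchange become rigorous on the good region; the estimate \eqref{c2} then disposes of the complement. Without inserting a truncation (or supplying an honest $n$-free dominating function, which you have not done), Step 2 of your argument is incomplete.
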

\begin{proof}
For any $K>0$ and $\ell=1,\dots, |\mathbf{m}|/2$,  we introduce the set 
\[
D_{|\mathbf{m}|,K}^\ell= \big\{(s_1, \dots, s_{|\mathbf{m}|}) \in D_{\mathbf{m}}:\, s_1<s_2<\cdots<s_{|\mathbf{m}|},\, s_{2\ell} -s_{2\ell-1} > K \big\},
\]
where $ D_{\mathbf{m}}$ is defined in \eref{e.3.1.4}.

Taking into account the results proved in \cite{HNX}, the proof of the convergence  (\ref{c1}) reduces to show that   for all $\ell=1,\dots,|\mathbf{m}|/2$
 \begin{equation} \label{c2} 
 \lim_{K\to\infty}\limsup_{n\to\infty}\, n^{\frac{ |\mathbf{m}|(Hd-1)}2}\, 
 \E \Big(
           \int_{D_{|\mathbf{m}|,K}^\ell} \prod_{i=1}^{|\mathbf{m}|}f(B(s_i))\, ds \Big)=0.
  \end{equation}         
 In order to prove (\ref{c2}), set  $|\mathbf{m}|=2m$. Then, it suffices to show that
\[
\lim_{K\to\infty}\limsup_{n\to\infty}\, n^{m(Hd-1)}\int_{\R^{2md}}\int_{D^{\ell}_{2m,K}} \Big(\prod^{2m}_{i=1} \widehat{f}(x_i)\Big) \exp\Big(-\frac{1}{2}\Var\big(\sum\limits^{2m}_{i=1} x_i\cdot B(s_i)\big)\Big)\, ds\, dx=0.
\]
Using similar arguments as in the proof Proposition  \ref{prop1} we can write
\begin{align*}
&\limsup_{n\to\infty}\, n^{m(Hd-1)}\int_{\R^{2md}}\int_{D^{\ell}_{2m,K}} \Big(\prod^{2m}_{i=1} \widehat{f}(x_i)\Big)\, \exp\Big(-\frac{1}{2}\Var(\sum\limits^{2m}_{i=1} x_i\cdot B(s_i))\Big)\, ds\, dx\\
=&\limsup_{n\to\infty}\, n^{m(Hd-1)}\int_{\R^{2md}}\int_{D^{\ell}_{2m,K}} \Big(\prod^{m}_{j=1} |\widehat{f}(z_{2j})|^2\Big)\\
&\qquad\qquad\times \exp\bigg(-\frac{1}{2}\Var\Big(\sum\limits^{2m}_{i=1} z_i\cdot \big(B(s_i)-B(s_{i-1})\big)\Big)\bigg)\, ds\, dz.
\end{align*}
The right hand side of the above equality is positive and less  than or equal to
\[
\limsup_{n\to\infty}\, n^{m(Hd-1)}\int_{\R^{2md}}\int_{D^{\ell}_{2m,K}} \Big(\prod^{m}_{j=1} |\widehat{f}(z_{2j})|^2\Big)\, \exp\Big(-\frac{\kappa_H}{2}\sum\limits^{2m}_{i=1} |z_i|^2(s_i-s_{i-1})^{2H}\Big)\, ds\, dz.
\]
Integrating with respect all $z_i$s and $s_i$s ($i\neq 2\ell$), the above limit is less than or equal to 
\begin{align*}
&c_1\int_{\R^d}\int^{\infty}_K |\widehat{f}(z_{2\ell})|^2\, e^{-\frac{\kappa_H}{2}|z_{2\ell}|^2u^{2H}}\, du\, dz_{2\ell}\\
\leq &\, c_2 \int_{\R^d}\int^{\infty}_K |z_{2\ell}|^2\, e^{-\frac{\kappa_H}{2}|z_{2\ell}|^2u^{2H}}\, du\, dz_{2\ell}\\
= &\, c_3\, K^{1-Hd-2H}.
\end{align*}
This completes the proof since $1-Hd-2H<0$.
 \end{proof}

{\medskip \noindent \textbf{Proof of Theorem \ref{th1}.}}    This follows
from Lemma \ref{lema2}, Propositions \ref{tight}, \ref{odd} and \ref{even} by the method of moments.


\begin{thebibliography}{99}

\bibitem{Be} Berman, S.M.  Local nondeterminism and local times of Gaussian
processes. \textit{Indiana Univ. Math.} \textbf{23} (1973)  64--94.


 

\bibitem{GH} Geman, D. and Horowitz, J.  Occupation densities. \textit{Annals of
Probability} \textbf{8} (1980)  1-67.

 
\bibitem{HNX} Hu, Y., Nualart, D., and Xu, F.  Central limit theorem for an additive functional of the fractional Brownian motion. \textit{Annals of Probability}, accepted.

 \bibitem{PSV}
Papanicolaou, G.C., Stroock, D., Varadhan, S. R. S. Martingale approach to some limit theorems.
\textit{Duke Univ. Math. Ser. III, Statistical Mechanics and Dynamical Systems}, 1977.


 




\end{thebibliography}
\end{document}